\newcommand{\numberset}{\mathbb}
\newcommand{\N}{\numberset{N}} 
\newcommand{\R}{\numberset{R}} 
\renewcommand{\phi}{\varphi} 
\renewcommand{\chi}{\mathcal{X}} 
\newcommand{\Hst}{\tilde{H}^s}
\renewcommand{\epsilon}{\varepsilon}
\newtheorem{theorem}{Theorem}
\newtheorem{definition}[theorem]{Definition}
\newtheorem{lemma}[theorem]{Lemma}
\newtheorem{remark}[theorem]{Remark}
\theoremstyle{definition}
\newtheorem{example}[theorem]{Example}
\newenvironment{system}%
{\left\lbrace\begin{aligned}}%
{\end{aligned}\right.}
\begin{document}

\title{Weak solutions for nonlinear waves in adhesive phenomena}
\author[1]{Mauro Bonafini}
\author[2]{Van Phu Cuong Le}

\affil[1]{Institut f\"ur Informatik, Georg-August-Universit\"at G\"ottingen, Germany, e-mail: bonafini@cs.uni-goettingen.de}
\affil[2]{Dipartimento di Informatica, Universit\`a di Verona, Italy, e-mail: vanphucuong.le@univr.it}
\date{\today}

\maketitle
\begin{abstract}
	We discuss a notion of weak solution for a semilinear wave equation that models the interaction of an elastic body with a rigid substrate through an adhesive layer, relying on results in $\cite{MCMO}$. Our analysis embraces the vector-valued case in arbitrary dimension as well as the case of non-local operators (e.g. fractional Laplacian).
\end{abstract}
\textbf{Keywords:} nonlinear waves, adhesive phenomena, non-local wave equations, nonlinear elasticity 

\textbf{2020 Mathematics Subject Classification:} 35L05, 35L15.
\section{Introduction}
In recent years, there have been many works devoted to adhesion phenomena arising from biophysics and engineering (see for instance $\cite{Isaraela, Zhao, Ghatak, Oyha}$ and references therein). A rigorous mathematical description of such phenomena is quite challenging, mainly because of the complex underlying mechanisms at both microscopic and macroscopic levels. In order to reproduce some essential features of these processes, increasingly accurate mathematical models are being proposed (see for instance  $\cite{KellerBurridge, Maddalena1, Maddalena2, CocliteFlorioLigaboMaddalena17, Yoshiho}$ and references therein).

Of particular interest is the study of the dynamic of an elastic body glued to a rigid substrate through an adhesive layer. Consider, for example, a vibrating string oscillating in and out a glue layer. The position of the evolving string is described by the scalar displacement $u:\,[0, T]\times[0,L]\to\R$, for $T,L > 0$.
In the absence of an adhesive region, the dynamic is typically described via a free wave equation, with either Dirichlet or Neumann boundary conditions. The presence of adhesive regions can then be described via a ``forcing'' potential $W$. Hence, one is led to consider the following semilinear wave equation
\begin{equation}\label{eq:semi0}
	u_{tt} -u_{xx} + W'(u) = 0,
\end{equation} 
coupled with suitable initial and boundary conditions. In \cite{CocliteFlorioLigaboMaddalena17}, for example, the glue region is assumed to cover $[0,L]\times [-u^*, u^*]$, for $u^* > 0$, and $W$ is selected to be
\begin{equation}\label{potential}
W(u)=
\begin{system}
& u^{2}			&\text{if }& |u|\leq u^{*}, 	\\
& (u^{*})^{2}	&\text{if }& |u|> u^{*}.	\\
\end{system}
\end{equation}
In this setting, the equation reduces to a free wave equation as soon as the string moves outside the glue layer. Once we introduce \eqref{eq:semi0}, a suitable notion of weak solution that takes into account the discontinuity of $W'$ is immediately needed.

Motivated by the one dimensional model we just described, we consider here, for $m > 0$ and a potential $W \colon \R^m \to \R$, the generalized problem
\begin{equation}\label{eq:semi1}
\begin{system}
& u_{tt}+(-\Delta )^{s}u +\nabla_{u} W(u)= 0      	&\quad&\text{in } (0,T) \times \Omega, 						\\
& u(t,x) = 0             							&\quad&\text{in } [0,T] \times (\R^{d} \setminus \Omega), 	\\
& u(0,x) = u_0(x)                           		&\quad&\text{in } \Omega, 									\\
& u_t(0,x) = v_0(x)                         		&\quad&\text{in } \Omega, 									\\
\end{system}
\end{equation} 
where $\Omega\subset \R^d$ is an open bounded domain with Lipschitz boundary, $u_0$ and $v_0$ are suitable initial conditions, and  $(-\Delta )^{s}$ stands for the fractional Laplacian ($s=1$ provides the standard Laplacian). We are interested in possible notions of weak solutions for such a system.
As self-evident, any notion of solution for \eqref{eq:semi1} heavily hinges on the regularity of the potential $W$. On one hand, for regular $W$, i.e. $W \in C^{1,1}(\R^m)$ and non-negative, existence of suitably defined weak solutions has been provided in \cite[Theorem 3]{MCMO}. On the other hand, less is known for less regular potentials like \eqref{potential}. In this note, we aim to explore how far the notion of weak solution of \cite{MCMO} can reach, and discuss its limitations.

In Section \ref{pre}, we first recall the notion of fractional Laplacian and fractional Sobolev spaces, we state the model problem \eqref{eq:semi1} and in Definition \ref{def:weak} we recall the working notion of weak solution. In Section \ref{case1}, Theorem \ref{thm:main1}, we assume $W \in {C}^1(\R^m)$ to be bounded and to have a bounded uniformly continuous gradient, and prove existence of weak solutions for \eqref{eq:semi1} via an approximation argument. In Section \ref{case2}, through Example \ref{exa}, we discuss how the proposed notion of weak solution cannot be adequate to less regular settings. Eventually, in Theorem \ref{thm:main2}, we prove existence of weak solutions for potentials behaving like \eqref{potential} under the restrictive assumption of small initial data.

\section{Preliminaries and model problem}\label{pre}
Let $d, m \in \mathbb{N}$ and $s > 0$. We define the fractional Laplacian operator $(-\Delta)^{s}$ as the operator whose Fourier symbol is $|\xi|^{2s}$, i.e., for any $u\in L^{2}(\R^{d}; \R^{m})$, we set
$$\mathcal{F}(-\Delta)^{s}u=|\xi|^{2s}\mathcal{F}u$$
where $\mathcal{F}$ denotes the Fourier transform. We denote by $H^s$ the fractional Sobolev space of order $s$, which is defined as
\[
H^s(\R^d) := \left\{ u \in L^2(\R^d ;\, \R^{m}) \,:\, \int_{\R^d} (1+|\xi|^{2s})|\mathcal{F}u(\xi)|^2\,d\xi < +\infty \right\}.
\]
For $u,v \in H^s(\R^d)$, we consider the scalar product $[u, v]_{s} = \langle (-\Delta)^{s/2}u, (-\Delta)^{s/2}v \rangle_{L^2(\R^d; \R^m)}$, the corresponding semi-norm $[u]_s = \sqrt{[u,u]_s} = ||(-\Delta)^{s/2}u||_{L^2(\R^d; \R^{m})}$ and the norm $||u||_s^2 = ||u||_{L^2(\R^d;\R^{m})}^2 + [u]_s^2$. For $\Omega \subset \R^d$ an open bounded set with Lipschitz boundary, we define
\[
\tilde{H}^s(\Omega) := \{ u \in H^s(\R^d;\R^{m}) \,:\, u = 0 \text{ a.e. in } \R^d\setminus \Omega \},
\]
and the corresponding dual space $H^{-s}(\Omega) := (\Hst(\Omega))^*$. The set of smooth compactly supported functions $C^{\infty}_{c}(\R^{d};\R^{m})$ is dense in $H^{s}(\R^{d})$ (see \cite{Luc}). We eventually recall and quickly prove the following embedding (based on \cite{Luc}).
\begin{lemma}\label{embedding}
	Let $s>0$, $2s>d$ and $u \in H^{s}(\R^{d})$. Then, $u \in C^{0}(\R^{d};\R^{m})$ and there exists a constant $C$ independent of $u$ such that
	\begin{equation}\label{embedding0}
	||u||_{C^{0}(\R^{d};\R^{m})}\leq C||u||_{H^s(\R^d)}.
	\end{equation}
\end{lemma}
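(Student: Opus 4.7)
The plan is to prove this via the Fourier inversion formula, exploiting the condition $2s>d$ to obtain integrability of $\widehat{u}$.

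First, I would observe that by the Cauchy--Schwarz inequality,
\[
\int_{\R^d} |\mathcal{F}u(\xi)|\,d\xi = \int_{\R^d} (1+|\xi|^{2s})^{1/2}|\mathcal{F}u(\xi)| \cdot (1+|\xi|^{2s})^{-1/2}\,d\xi \le \|u\|_s \left( \int_{\R^d} \frac{d\xi}{1+|\xi|^{2s}} \right)^{1/2}.
\]
The key point is that the last integral is finite precisely when $2s>d$, which I would verify by passing to polar coordinates: the tail integral behaves like $\int_1^\infty r^{d-1-2s}\,dr$, which converges under our hypothesis. Call the resulting constant $C_{s,d}^{1/2}$, so that $\mathcal{F}u \in L^1(\R^d;\R^m)$ with $\|\mathcal{F}u\|_{L^1} \le C_{s,d}^{1/2} \|u\|_s$.

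Next, since $\mathcal{F}u \in L^1$, the inverse Fourier transform $\mathcal{F}^{-1}(\mathcal{F}u)$ is well-defined pointwise and is a continuous, bounded function (this is a standard consequence of dominated convergence applied to the defining integral, together with the Riemann--Lebesgue lemma), with sup-norm bounded by a dimensional constant times $\|\mathcal{F}u\|_{L^1}$. By Fourier inversion in $L^2$, this continuous function agrees almost everywhere with $u$; redefining $u$ on the null set where they differ gives a representative $u \in C^0(\R^d;\R^m)$. Combining the two bounds yields
\[
\|u\|_{C^0(\R^d;\R^m)} \le C\,\|u\|_s \le C\,\|u\|_{H^s(\R^d)}
\]
for some constant $C$ depending only on $s$ and $d$, establishing \eqref{embedding0}.

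There is no real obstacle here: the only subtle point is making the Fourier inversion step rigorous, since a priori $u$ is only in $L^2$, so Fourier inversion is $L^2$-valued. The standard workaround is to first carry out the above estimate for $u \in C^\infty_c(\R^d;\R^m)$, where everything is classical, and then use the density of $C^\infty_c$ in $H^s$ recalled right before the lemma statement to extend the inequality to arbitrary $u \in H^s$; a Cauchy sequence in $H^s$ with continuous bounded representatives converges uniformly by \eqref{embedding0}, hence the limit admits a continuous representative.
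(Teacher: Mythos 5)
Your proof is correct and rests on the same key estimate as the paper's: a Cauchy--Schwarz splitting on the Fourier side using the integrability of $(1+|\xi|^{2s})^{-1}$ (equivalently $(1+|\xi|^{s})^{-2}$) when $2s>d$; the paper carries this out for Schwartz functions and concludes by density of smooth functions in $H^s(\R^d)$, which is exactly the fallback you describe in your final paragraph. Your primary, more direct route --- showing $\mathcal{F}u\in L^1$ for the given $u\in H^s$ and invoking the $L^1$ inversion theorem together with the consistency of the $L^1$ and $L^2$ inverse transforms --- is also rigorous, so either variant works.
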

\begin{proof}
	Let $\mathcal{S}(\R^{d};\R^{m})$ be the Schwartz space of rapidly decaying functions and fix $f \in \mathcal{S}(\R^{d};\R^{m})$. One has
	\begin{equation*}\label{uniqueness1}
	\begin{aligned}
	(2\pi)^{\frac{d}{2}}|f(x)|=\left|(2\pi)^{\frac{d}{2}}\mathcal{F}^{-1}(\hat{f})(x)\right|&=\left|\int_{\R^{d}}e^{ix\xi }\mathcal{F}f(\xi)d\xi\right|\leq \int_{\R^{d}}|\mathcal{F}f(\xi)|(1+|\xi|^{s})\frac{1}{1+|\xi|^{s}}d\xi\\
	&\leq \left( \int_{\R^{d}}\frac{1}{(1+|\xi|^{s})^{2}}d\xi \right)^{\frac{1}{2}} \left( \int_{\R^{d}}|\mathcal{F}f(\xi)|^{2}(1+|\xi|^{s})^{2}d\xi \right)^{\frac{1}{2}}\\
	&\leq \sqrt{2}\left( \int_{\R^{d}}\frac{1}{(1+|\xi|^{s})^{2}}d\xi \right)^{\frac{1}{2}} \left( \int_{\R^{d}}|\mathcal{F}f(\xi)|^{2}(1+|\xi|^{2s})d\xi \right)^{\frac{1}{2}}\\
	&\leq \sqrt{2}\left( \int_{\R^{d}}\frac{1}{(1+|\xi|^{s})^{2}}d\xi \right)^{\frac{1}{2}} ||f||_{H^s(\R^d)}.
	\end{aligned}
	\end{equation*}
	Since we consider $2s>d$, $\int_{\R^{d}}\frac{1}{(1+|\xi|^{s})^{2}}d\xi$ is finite. Thus, we obtain that 
	\begin{equation}\label{embedding2}
	||f||_{C^{0}(\R^{d};\R^{m})}\leq C||f||_{H^s(\R^d)},
	\end{equation}
	which is exactly \eqref{embedding0}. Fix now any  $u\in H^s(\R^d)$. By the density of $\mathcal{S}(\R^{d};\R^{m})$ in $H^s(\R^d)$, there exists a sequence $\lbrace f_{n} \rbrace_{n} \subset \mathcal{S}(\R^{d};\R^{m})$ such that $f_{n}$ converges to $u$ in $H^s(\R^d)$. In particular, $\lbrace f_{n} \rbrace_n$ is a Cauchy sequence in $H^s(\R^d)$ and by $\eqref{embedding2}$ we obtain
	\begin{equation}\label{embedding3}
	||f_{k}-f_{l}||_{C^{0}(\R^{d};\R^{m})}\leq C||f_{k}-f_{l}||_{H^s(\R^d)} \quad \text{for any } k,l\in \N.
	\end{equation}
	Inequality $\eqref{embedding3}$ amounts to say that $\lbrace f_{n} \rbrace_n$ is a Cauchy sequence in $C^{0}(\R^{d};\R^{m})$, hence $u \in C^{0}(\R^{d};\R^{m})$, $f_n \to u$ uniformly as $n \to \infty$ and
	\begin{equation*}
	||u||_{C^{0}(\R^{d};\R^{m})}\leq C||u||_{H^s(\R^d)}.
	\qedhere
	\end{equation*}
\end{proof}
\subsection{Model problem.}
For an open bounded set $\Omega \subset \R^d$ with Lipschitz boundary and a potential $W\colon \R^m \to [0, \infty)$ (whose regularity we specify later on), we look for a solution $u\,:\,[0, T]\times \Omega \to \R^{m}$ of
\begin{equation}\label{eq:smlwaves}
\begin{system}
& u_{tt} + (-\Delta)^s u +\nabla_{u}W(u)= 0             	&\quad&\text{in } (0,T) \times \Omega                                   	\\
& u(t,x) = 0                                &\quad&\text{in } [0,T] \times (\R^d \setminus \Omega)                      \\
& u(0,x) = u_0(x)                           &\quad&\text{in } \Omega                                                    \\
& u_t(0,x) = v_0(x)                         &\quad&\text{in } \Omega                                                    \\
\end{system}
\end{equation}
with initial data $u_0 \in \Hst(\Omega)$ and $v_0 \in L^2(\Omega; \R^m)$ (we intend that $v_{0}=0 \mbox{ in } \R^{d} \setminus \Omega$). For $m = d$ one can conventionally interpret $u$ as the  displacement of an elastic body (see \cite[Section 2]{CocliteFlorioLigaboMaddalena19}). A notion of weak solution for problem $\eqref{eq:smlwaves}$ can be given as follows.
\begin{definition}[Weak solution and energy]\label{def:weak}
	Let $T > 0$. We say $u = u(t,x)$ is a weak solution of $\eqref{eq:smlwaves}$ in $(0,T)$ if
	\begin{enumerate}
		\item
		$
		u \in L^\infty(0,T; \Hst(\Omega)) \cap W^{1,\infty}(0,T;L^2(\Omega))$ and $u_{tt} \in L^\infty(0,T;H^{-s}(\Omega)),
		$
		\item for all $\phi \in L^{1}(0,T;\Hst(\Omega))$
		\begin{equation}\label{eq:eqweak}
		\int_{0}^T \langle u_{tt}(t), \phi(t) \rangle dt + \int_{0}^T [ u(t), \phi(t) ]_{s} \, dt +\int_{0}^T \int_\Omega \nabla_{u} W(u(t)) \phi(t) \,dxdt = 0
		\end{equation}
		with
		\begin{equation}\label{eq:u0free}
		u(0,x)=u_{0} \quad \text{ and } \quad u_{t}(0,x)=v_{0}.
		\end{equation}
	\end{enumerate}
	The energy of $u$ is defined as
	\[
	E(u(t)) = \frac12 ||u_{t}(t)||^{2}_{L^{2}(\Omega)}+\frac12 [u(t)]_{s}^2+||W(u(t))||_{L^{1}(\Omega)} \quad \text{for } t \in [0,T].
	\]
\end{definition}

Existence of a weak solution in the sense of Definition \ref{def:weak} has been proved in \cite[Theorem 3(i)]{MCMO} for non-negative potentials $W \in C^1(\R^m)$ with Lipschitz continuous gradient.
We prove in the next section the existence of weak solutions under slightly relaxed assumptions, i.e. $W \in C^1(\R^m)$, non-negative, uniformly bounded, with a bounded and uniformly continuous gradient. Less regular potentials are then partially addressed in Section \ref{case2}, where limitations of the current approach are discussed in Example \ref{exa}, and existence of weak solutions is provided, for $2s > d$, under the restrictive assumption of small initial data.

\section{The case of continuous $\nabla W$}\label{case1}

This section is devoted to the proof of the following theorem.
\begin{theorem}\label{thm:main1}
	Let $W \in C^1(\R^m)$, and $W$ be non-negative. Assume there exists $K > 0$ such that $0 \leq W(y) \leq K$ and $0 \leq |\nabla W(y)| \leq K$ for all $y \in \R^m$, with $\nabla W$ uniformly continuous. Then, there exists a weak solution of $\eqref{eq:smlwaves}$ satisfying the energy inequality
	\begin{equation}\label{energyestimate1}
	E(u(t))\leq E(u(0)) \quad \text{for any $t\in [0, T]$}.
	\end{equation}
\end{theorem}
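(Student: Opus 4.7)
The strategy is a standard approximation argument based on \cite[Theorem 3]{MCMO}, which already covers the case of $C^{1,1}$ non-negative potentials. Let $\rho_n$ be a standard family of mollifiers on $\R^m$ and set $W_n := W \ast \rho_n$. Each $W_n$ is smooth, non-negative, uniformly bounded by $K$, with gradient $\nabla W_n = (\nabla W)\ast \rho_n$ bounded by $K$ and globally Lipschitz (the Lipschitz constant is allowed to depend on $n$). Since $\nabla W$ is bounded and uniformly continuous, $\nabla W_n \to \nabla W$ and $W_n \to W$ locally uniformly on $\R^m$. Applying \cite[Theorem 3]{MCMO} to each $W_n$ with the same initial data $(u_0, v_0)$ yields a weak solution $u_n$ of the corresponding problem satisfying the associated energy inequality $E_n(u_n(t)) \leq E_n(u_n(0))$.

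Next I would derive uniform estimates. Since $E_n(u_n(0)) \leq \tfrac12 \|v_0\|_{L^2}^2 + \tfrac12 [u_0]_s^2 + K|\Omega|$, the energy inequality gives uniform bounds for $\{u_n\}$ in $L^\infty(0,T;\Hst(\Omega))$ and for $\{(u_n)_t\}$ in $L^\infty(0,T;L^2(\Omega))$. Rewriting the equation as $(u_n)_{tt} = -(-\Delta)^s u_n - \nabla W_n(u_n)$ and using $\|\nabla W_n(u_n)\|_{L^\infty} \leq K$, we also get a uniform bound for $\{(u_n)_{tt}\}$ in $L^\infty(0,T;H^{-s}(\Omega))$. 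Banach-Alaoglu then yields, along a subsequence, $u_n \weakstar u$ in $L^\infty(0,T;\Hst(\Omega))$, $(u_n)_t \weakstar u_t$ in $L^\infty(0,T;L^2(\Omega))$, and $(u_n)_{tt} \weakstar u_{tt}$ in $L^\infty(0,T;H^{-s}(\Omega))$.

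The main obstacle is the passage to the limit in the nonlinear term, which is where the weakened regularity of $W$ really enters. An Aubin-Lions type argument, using the compact embedding $\Hst(\Omega) \hookrightarrow L^2(\Omega)$ and the uniform bound of $(u_n)_t$ in $L^\infty(0,T;L^2)$, gives $u_n \to u$ strongly in $C([0,T]; L^2(\Omega))$ and, up to a further subsequence, pointwise a.e.\ in $(0,T)\times\Omega$. Combining the uniform convergence of $\nabla W_n$ to the (uniformly continuous) $\nabla W$ with the a.e.\ convergence of $u_n$ and the uniform bound $|\nabla W_n(u_n)| \leq K$, dominated convergence yields $\nabla W_n(u_n) \to \nabla W(u)$ strongly in $L^p((0,T)\times\Omega;\R^m)$ for every $p\in[1,\infty)$. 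This is enough to pass to the limit in the weak formulation \eqref{eq:eqweak} written for $u_n$ and recover it for $u$; the linear terms pass via the weak-$\ast$ convergences already established, tested against $\phi \in L^1(0,T;\Hst(\Omega))$.

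It remains to verify the initial conditions and the energy inequality. The strong convergence $u_n \to u$ in $C([0,T]; L^2(\Omega))$ directly gives $u(0,\cdot) = u_0$. For $u_t(0) = v_0$, the uniform bound on $(u_n)_{tt}$ in $L^\infty(0,T;H^{-s})$ promotes $(u_n)_t$ to an equicontinuous family with values in $H^{-s}(\Omega)$; combined with the weak-$\ast$ convergence $(u_n)_t \weakstar u_t$, this yields $(u_n)_t \to u_t$ in $C([0,T]; H^{-s}(\Omega))$, and since $(u_n)_t(0) = v_0$ for every $n$ we conclude $u_t(0) = v_0$. Finally, \eqref{energyestimate1} follows by letting $n\to\infty$ in $E_n(u_n(t)) \leq E_n(u_n(0))$: weak lower semicontinuity handles $\|u_t(t)\|_{L^2}^2$ and $[u(t)]_s^2$, while the uniform convergence $W_n \to W$ together with pointwise a.e.\ convergence of $u_n(t)$ and the bound $W_n(u_n(t)) \leq K$ yield, via dominated convergence, both $\|W_n(u_n(t))\|_{L^1} \to \|W(u(t))\|_{L^1}$ for a.e.\ $t$ and $\|W_n(u_0)\|_{L^1} \to \|W(u_0)\|_{L^1}$.
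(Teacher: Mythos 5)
Your proposal is correct and follows essentially the same route as the paper: regularize the potential (the paper postulates a family $W_\epsilon$ with exactly the properties your mollification provides), invoke \cite[Theorem 3(i)]{MCMO} for the approximate problems, derive uniform energy and $H^{-s}$ bounds, extract limits by compactness, pass to the limit in the nonlinear term by dominated convergence, and recover the initial data and energy inequality in the limit. The only cosmetic difference is in the treatment of $\|W_n(u_n(t))\|_{L^1}$: the paper exploits the Lipschitz continuity of $W$ (from $|\nabla W|\leq K$) to get convergence in $C^0([0,T];L^1(\Omega))$, hence the energy inequality for every $t$, whereas your dominated-convergence phrasing gives it a.e.\ in $t$ unless you add the standard fixed-$t$ subsequence argument, which is immediate.
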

The hypotheses of Theorem \ref{thm:main1} reproduce prototypical settings modelling adhesive behaviours, as in \cite{Maddalena3}, where the forcing potential $W$ is expected to have no influence outside a bounded region, and thus we can assumed $W$ to be constant outside said region. Our proof relies on an approximating procedure and leverages existence results in \cite{MCMO} to provide existence of approximate weak solutions. The given regularity of the potential will then ensure that we can pass to the limit along the sequence of approximate solutions (see Step 3 in the proof below).
\begin{proof}[Proof of Theorem \ref{thm:main1}]
	$\,$
	
	\noindent
	\emph{Step 1. Construction of regularized approximate weak solutions.}
	Let us consider a family of non-negative potentials $(W_{\epsilon})_{\epsilon >0}$ in $C^{2}(\R^m)$ such that:
	\begin{itemize}
		\item[(i)] $W_{\epsilon}$ converges uniformly to $W$,
		\item[(ii)] $\nabla W_{\epsilon}$ converges uniformly to $\nabla W$,
		\item[(iii)] $\nabla W_{\epsilon}$ is Lipschitz continuous for each $\epsilon$.
	\end{itemize} 
	Leveraging the existence result in \cite[Theorem 3(i)]{MCMO}, for each $\epsilon >0$ there exists a weak solution $u^{\epsilon}$ of
	\begin{equation*}
	\begin{system}
	& u^{\epsilon}_{tt} + (-\Delta)^s u^{\epsilon} +\nabla_{u}W_\epsilon(u^{\epsilon})= 0  &\quad&\text{in } (0,T) \times \Omega       \\
	& u^{\epsilon}(t,x) = 0                                &\quad&\text{in } [0,T] \times (\R^d \setminus \Omega)                      \\
	& u^{\epsilon}(0,x) = u_0(x)                           &\quad&\text{in } \Omega                                                    \\
	& u^{\epsilon}_t(0,x) = v_0(x)                         &\quad&\text{in } \Omega                                                    \\
	\end{system}
	\end{equation*}
	in the sense of Definition \ref{def:weak}. In particular, we have
	\begin{equation}\label{Solutionestimate1}
	\int_{0}^T \langle u^\epsilon_{tt}(t), \phi(t) \rangle dt + \int_{0}^T [ u^\epsilon(t), \phi(t) ]_{s} \, dt + \int_{0}^T \int_\Omega \nabla_{u} W_\epsilon(u^\epsilon(t)) \phi(t) \,dxdt = 0
	\end{equation}
	for all $\phi \in L^{1}(0,T;\Hst(\Omega))$, and, for any $t \in [0,T]$, we have
	\begin{equation}\label{Eestimate1}
	\frac12 ||u^\epsilon_{t}(t)||^{2}_{L^{2}(\Omega)}+\frac12 [u^\epsilon(t)]_{s}^2+||W_\epsilon(u^\epsilon(t))||_{L^{1}(\Omega)} \leq \frac12 ||v_0||^{2}_{L^{2}(\Omega)}+\frac12 [u_0]_{s}^2+||W_{\epsilon}(u_0)||_{L^{1}(\Omega)}.
	\end{equation}
	
	\medskip
	\noindent
	\emph{Step 2. Existence of a cluster point.}
	Since $W_{\epsilon}$ converges uniformly to $W$ in $\R^{m}$ and $W$ is bounded, for sufficiently small $\epsilon$ we have a uniform bound on $W_{\epsilon}$. Thus, using $\eqref{Eestimate1}$, there exists a constant $C>0$ such that for any $t \in [0,T]$
	\begin{equation}\label{Eestimate2}
	E(u^\epsilon(t)) = \frac12 ||u^\epsilon_{t}(t)||^{2}_{L^{2}(\Omega)}+\frac12 [u^\epsilon(t)]_{s}^2+||W_\epsilon(u^\epsilon(t))||_{L^{1}(\Omega)} \leq C.
	\end{equation}
	From this energy bound, via standard compactness arguments, as done in \cite[Proposition 6]{MCMO}, we deduce that there exists $u \in L^\infty(0,T; \Hst(\Omega)) \cap W^{1,\infty}(0,T;L^2(\Omega))$ such that, up to a subsequence, as $\epsilon \to 0$, we have
	\begin{itemize}
		\item[(iv)] $u^\epsilon \to u$ in $C^0([0,T];L^2(\Omega))$,
		\item[(v)] $u_t^\epsilon \rightharpoonup^* u_t$ in  $L^\infty(0,T;L^2(\Omega))$,
		\item[(vi)] $u^\epsilon(t) \rightharpoonup u(t)$ in $\Hst(\Omega)$ for any $t \in [0,T]$,
		\item[(vii)] $u^\epsilon \rightharpoonup^* u$ in $L^\infty(0,T;\Hst(\Omega))$.
	\end{itemize}
	
	\medskip
	\noindent
	\emph{Step 3. Passage to the limit in the definition of weak solution.}
	In order to prove that $u$ is a weak solution we pass to the limit in \eqref{Solutionestimate1} as $\epsilon \to 0$. To do so, observe that
	\begin{itemize}
		\item $u_{tt}\in L^\infty(0,T;H^{-s}(\Omega))$ and  $u^\epsilon_{tt}  \rightharpoonup^* u_{tt}$ in $L^\infty(0,T;H^{-s}(\Omega))$
		
		Indeed, from $\eqref{Eestimate2}$, $\eqref{Solutionestimate1}$, and the uniform bound on $|\nabla W_{\epsilon}|$, we obtain that
		$u^{\epsilon}_{tt}$ is uniformly bounded in $L^\infty(0,T;H^{-s}(\Omega))$. This implies that $u^\epsilon_{tt}  \rightharpoonup^* u_{tt}$ in $L^\infty(0,T;H^{-s}(\Omega))$.
		
		\item $\nabla_{u} W_{\epsilon}(u^\epsilon) \rightharpoonup^* \nabla_{u} W(u)$ in $L^\infty(0,T;H^{-s}(\Omega))$
		
		Indeed, $u^\epsilon \to u$ for a.e.  $(x,t)\in (0,T)\times \Omega$ due to the convergence of $u^\epsilon$ to $u$ in $C^0([0,T];L^2(\Omega))$. Thus, since $\nabla W_{\epsilon}$ converges uniformly to $\nabla W$ in $\R^{m}$ and $\nabla W_{\epsilon}$ is uniformly bounded, by the dominated convergence theorem we conclude that
		$$
		\nabla_{u} W_{\epsilon}(u^\epsilon) \to \nabla_{u} W(u) \mbox{ in } L^2((0,T)\times \Omega).
		$$
		On the other hand, $\nabla_{u} W_{\epsilon}(u^\epsilon)$ is uniformly bounded in $L^\infty(0,T;H^{-s}(\Omega))$, therefore we can conclude that $\nabla_{u} W_{\epsilon}(u^\epsilon) \rightharpoonup^* \nabla_{u} W(u)$ in $L^\infty(0,T;H^{-s}(\Omega))$.
	\end{itemize}
	Thus, letting $\epsilon \to 0$ in \eqref{Solutionestimate1} we obtain
	\begin{equation}\label{solution}
	\int_{0}^T \langle u_{tt}(t), \phi(t) \rangle dt + \int_{0}^T [ u(t), \phi(t) ]_{s} \, dt +\int_{0}^T \int_\Omega \nabla_{u} W(u(t)) \phi(t) \,dxdt=0
	\end{equation}
	for all $\phi \in L^{1}(0,T;\Hst(\Omega))$. To conclude, observe that
	\begin{itemize}				
		\item $E(u(t))\leq E(u(0))$ for each $t\in [0, T]$
		
		From the fact that $u^{\epsilon}_{tt}$ is uniformly bounded in $L^\infty(0,T;H^{-s}(\Omega))$, we deduce that $u_{t}^{\epsilon} \to u_{t}$ in $C^{0}([0, T]; H^{-s}(\Omega))$. On the other hand, each $u^{\epsilon}_{t}(t)$ is uniformly bounded in $L^{2}(\Omega)$, thus we obtain that $u^\epsilon_{t}(t) \rightharpoonup u_{t}(t)$ in $L^2(\Omega)$ for each $t\in [0, T]$. 
		For the convergence of $W_{\epsilon}(u^\epsilon)$, let $t\in [0, T]$ and fix an arbitrary $\eta>0$. Since $W_{\epsilon}$ converges uniformly to $W$, for sufficiently small $\epsilon$ we obtain that
		\begin{equation}\label{uniformbound}
		|W_{\epsilon}(y)-W(y)|\leq \eta
		\end{equation}
		for any $y\in \R^m$. Hence,
		\begin{equation*}\label{convulotionestimate}
		\begin{aligned}
		\int_{\Omega}|W_{\epsilon}(u^{\epsilon}(t))-W(u(t))|dx&\leq \int_{\Omega}|W_{\epsilon}(u^{\epsilon}(t))-W(u^{\epsilon}(t))|dx+\int_{\Omega}|W(u^{\epsilon}(t))-W(u(t))|dx  \\
		&\leq |\Omega|\eta+\max_{y\in \R^{m}}|\nabla W(y)|\cdot \max_{t\in [0, T]}||u^{\epsilon}-u||_{L^{2}(\Omega)}|\Omega|^{\frac{1}{2}}\\
		\end{aligned}
		\end{equation*}
		where we have made use of $\eqref{uniformbound}$, Lipschitz continuity of $W$, and H\"older's inequality. Thus, from the fact that $u^\epsilon \to u$ in $C^0([0,T];L^2(\Omega))$, we can deduce that, up to a subsequence, $W_{\epsilon}(u^\epsilon) \to W(u)$ in $C^0([0,T];L^1(\Omega))$. The energy inequality for $u$ follows passing to the limit in $\eqref{Eestimate1}$.
		
		\item $u(0, x)=u_{0}$, $u_{t}(0,x)=v_{0}$
		
		Since $u^\epsilon \to u$ in $C^0([0,T];L^2(\Omega))$ and $u_{t}^{\epsilon} \to u_{t}$ in $C^{0}([0, T]; H^{-s}(\Omega))$, we have $u(0, x)=u_{0}$ and $u_{t}(0,x)=v_{0}$.
	\end{itemize}
\end{proof}
\begin{remark}
	Assume to have more regular initial data, i.e. $u_{0}\in \tilde{H}^{2s}(\Omega)$ and $v_{0}\in \tilde{H}^{s}(\Omega)$. For these data, the weak solution $u$ of $\eqref{eq:smlwaves}$ constructed in Theorem \ref{thm:main1} is energy preserving. Indeed, by \cite[Theorem 3(ii)]{MCMO}, the approximate solutions $u^\epsilon$ turn out to be more regular and energy preserving. Moreover, by using the uniform boundedness of $W_\epsilon$ and $\nabla W_\epsilon$, one can show that the velocity of the approximate solutions, namely $(u^{\epsilon}_{t})_{\epsilon}$, is uniformly bounded in $W^{1,\infty}(0, T; \tilde{H}^{s}(\Omega))$. This implies in the limit that $u_{t}\in W^{1,\infty}(0, T; \tilde{H}^{s}(\Omega))$, which in turn gives rise to the energy conservation of $u$ by using suitable test functions: by substituting the test function $\phi(t,x)=I_{[t_{1},\, t_{2}]}(t)\cdot u_{t}(t,x)$ in equality \eqref{eq:eqweak}, where $0\leq t_{1}<t_{2}\leq T$, and $I_{[t_{1},\, t_{2}]}$ is the indicator function on the time interval $[t_{1}, t_{2}]$, we obtain that
	\begin{equation*}
	\begin{aligned}
	&\int_{t_{1}}^{t_{2}}\langle u_{tt}(t),u_{t}(t)\rangle dt+\int_{t_{1}}^{t_{2}}[u(t),u_{t}(t)]_{s}dt+\int_{t_{1}}^{t_{2}}\int_{\Omega}\nabla_{u}W(u(t,x))u_{t}(t,x)dxdt=0\\
	&\Longleftrightarrow \int_{t_{1}}^{t_{2}} \frac{dE(u(t))}{dt}dt=0\\
	&\Longleftrightarrow E(u(t_{1}))=E(u(t_{2})),
	\end{aligned}
	\end{equation*}
	which proves that $E$ is constant inside the interval $[0, T]$.
\end{remark}

\section{The case of discontinuous $\nabla W$}\label{case2}
We consider in this section $W \in C(\R^{m})$ defined as
\begin{equation}\label{eq:discW}
W(y) =
\begin{system}
& |y|^2 &\quad& \text{ if } y\in \overline{\mathbf{B}(0, 1)} \\
& 1     &\quad& \text{ if } y\notin \mathbf{B}(0, 1) \\
\end{system}
\end{equation}
where $\mathbf{B}(0, 1)=\lbrace \, y\in \R^{m} \, | \, |y|< 1 \, \rbrace$, $\overline{\mathbf{B}(0, 1)}=\lbrace \, y\in \R^{m} \, | \, |y|\leq 1 \, \rbrace$. This potential designates $\partial \mathbf{B}(0, 1)$ as the set of critical states serving as boundary of the adhesive dynamics: as in \cite{CocliteFlorioLigaboMaddalena17}, this corresponds to model the adhesive contribution through a sharply discontinuous behaviour (adhesion inside $\mathbf{B}(0, 1)$, no adhesion outside).

Looking back at Definition \ref{def:weak}, we notice that the sharp discontinuity of $\nabla W$ on $\partial \mathbf{B}(0, 1)$ immediately jeopardizes the well-posedness of equality \eqref{eq:eqweak}: indeed, the term $\nabla_{u} W(u(t))$ is in principle not well-defined whenever $u(t) \in \partial \mathbf{B}(0, 1)$. One possible fix would be to arbitrarily choose a-priori a value for $\nabla W$ on the discontinuity set, but doing so invalidates any attempt to prove existence of weak solutions via an approximating approach. This is illustrated in the following example, where we consider for simplicity a Neumann problem in order to be able to write explicitly some approximate solutions and highlight why using \eqref{eq:eqweak} may not be adequate.
\begin{example}\label{exa}
	Consider the $1$-dimensional problem
	\begin{equation}\label{eq:smlwavesexa}
	\begin{system}
	& u_{tt}  - u_{xx} +W'(u)= 0       &\quad&\text{in } (0,T) \times (0, L)    \\
	& u_{x}(t,0) =u_{x}(t,L)= 0        &\quad&\text{in } [0,T]             		\\
	& u(0,x) = 1                       &\quad&\text{in } [0, L]                 \\
	& u_t(0,x) = 0                     &\quad&\text{in } [0, L]                 \\
	\end{system}
	\end{equation}
	for
	\begin{equation*}
	W(u)=
	\begin{system}
	& u^{2}	&\text{if }& |u|\leq 1 	\\
	& 1		&\text{if }& |u|> 1	\\
	\end{system}
	\quad \text{and} \quad
	W'(u)=
	\begin{system}
	& 2u	&\text{if }& |u|\leq 1 	\\
	& 0		&\text{if }& |u|> 1. 	\\
	\end{system}
	\end{equation*}
	Notice how we choose to set $W'(\pm 1) = \pm 2$. Consider now the sequence of approximate potentials $W_\epsilon$ with
	\begin{equation*}
	W_{\epsilon}'(u)=
	\begin{system}
	& (2-\epsilon)u								 &\text{if }& |u|\leq 1 				 \\
	& \frac{2-\epsilon}{\epsilon}(1+\epsilon-u)  &\text{if }& 1\leq u \leq 1+\epsilon 	 \\
	& \frac{\epsilon-2}{\epsilon}(1+\epsilon+u)  &\text{if }& -1-\epsilon \leq u \leq 1  \\
	& 0  										 &\text{if }& |u| \geq 1+\epsilon .		 \\
	\end{system}
	\end{equation*}
	One can easily show that $u^{\epsilon}(t,x) = 1+\epsilon$ solves the approximate problems
	\begin{equation*}
	\begin{system}
	& u^{\epsilon}_{tt}  - u^{\epsilon}_{xx} +W'_{\epsilon}(u^{\epsilon})= 0  &\quad&\text{in } (0,T) \times (0, L) 	\\
	& u^{\epsilon}_{x}(t,0) =u^{\epsilon}_{x}(t,L)= 0                         &\quad&\text{in } [0,T]                   \\
	& u^{\epsilon}(0,x) = 1+\epsilon                          				  &\quad&\text{in } [0, L]                  \\
	& u^{\epsilon}_t(0,x) = 0                         					      &\quad&\text{in } [0, L].                 \\
	\end{system}
	\end{equation*}
	These approximate solutions $(u^{\epsilon})_{\epsilon}$ converge to the constant function $1$ in $C([0,T] \times [0, L])$, satisfy \eqref{eq:eqweak}, but when attempting to pass to the limit we have
	\[
	\lim_{\epsilon \to 0} \int_\Omega W'_\epsilon(u^\epsilon(t)) \phi(t) \,dxdt = 0,
	\]
	while $W'(1) = 2$. Hence, the limit does not satisfy \eqref{eq:eqweak}, and generally we cannot pass to the limit in any definition of weak solution involving \eqref{eq:eqweak}. Indeed, the general lack of information on the distribution of the values of the approximate solutions $u_{\epsilon}$ around the critical states of $\nabla W$ prevents us from providing direct proofs by approximation. Hence, weaker notions of solutions are needed.
\end{example}

By Example \ref{exa}, the notion of weak solution provided by Definition \ref{def:weak} is not well-suited when dealing with potentials with discontinuous gradients. A restrictive result can however be provided, under the assumption of small initial data (i.e., when the troublesome region is completely avoided).
\begin{theorem}\label{thm:main2} Consider $2s>d$, $W$ as defined in \eqref{eq:discW} and assume that
	\begin{equation}\label{initaldata}
	||u_{0}||_{\Hst(\Omega)}\leq \epsilon_{1},\  ||v_{0}||_{L^{2}(\Omega)}\leq \epsilon_{2}
	\end{equation}
	for sufficiently small $\epsilon_{1}$, $\epsilon_{2}$. Then, there exists a weak solution of problem $\eqref{eq:smlwaves}$ in the sense of Definition $\ref{def:weak}$ with
	\begin{equation}\label{lessone}
	|u(x,t)|< 1 \quad \text{for all $(t, x)\in [0, T]\times \Omega$}
	\end{equation}
	and
	\begin{equation}\label{energyestimate}
	E(u(t))\leq E(u(0)) \quad \text{for any $t\in [0, T]$.}
	\end{equation}
\end{theorem}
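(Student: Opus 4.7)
The plan is to reduce the problem to the regular setting of Theorem~\ref{thm:main1} by modifying $W$ outside the unit ball, and then using the embedding $\Hst\hookrightarrow C^0$ from Lemma~\ref{embedding} (available precisely because $2s>d$) together with the energy inequality to show that, for sufficiently small initial data, the resulting weak solution remains strictly inside $\mathbf{B}(0,1)$, where the modification is inactive and $\tilde W$ coincides with $W$.

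First, I would fix $\delta\in(0,1)$ and construct a radial auxiliary potential $\tilde W\in C^1(\R^m)$, non-negative, uniformly bounded, with bounded and uniformly continuous gradient, such that $\tilde W(y)=|y|^2$ for $|y|\le 1-\delta$ and $\tilde W(y)=1$ for $|y|\ge 1$ (a smooth radial interpolation in the annulus $1-\delta\le |y|\le 1$ does the job). Theorem~\ref{thm:main1} applied to $\tilde W$ then provides a weak solution $u$ of \eqref{eq:smlwaves} with $\tilde W$ in place of $W$, satisfying $E_{\tilde W}(u(t))\le E_{\tilde W}(u(0))$ for every $t\in[0,T]$.

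Next, I would turn the energy control into a uniform $C^0$ bound. By Lemma~\ref{embedding}, $\|u_0\|_{C^0}\le C\epsilon_1$, so for $\epsilon_1$ small enough $|u_0|\le 1-\delta$ pointwise, hence $\tilde W(u_0)=|u_0|^2$ and $\|\tilde W(u_0)\|_{L^1(\Omega)}\le \epsilon_1^2$. Dropping the non-negative $\|\tilde W(u(t))\|_{L^1}$ contribution from the energy inequality gives
\begin{equation*}
[u(t)]_s^2 + \|u_t(t)\|_{L^2(\Omega)}^2 \le \epsilon_2^2 + \epsilon_1^2 + 2\epsilon_1^2 = \epsilon_2^2 + 3\epsilon_1^2,
\end{equation*}
and then integrating $u(t)=u_0+\int_0^t u_s\,ds$ in $L^2$ yields $\|u(t)\|_{L^2}\le \epsilon_1 + T\sqrt{\epsilon_2^2+3\epsilon_1^2}$. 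Combining these two estimates controls $\|u(t)\|_{\Hst}$ uniformly in $t\in[0,T]$ by a quantity $M(T,\epsilon_1,\epsilon_2)$ that tends to $0$ as $(\epsilon_1,\epsilon_2)\to 0$. A second application of Lemma~\ref{embedding} then gives
\begin{equation*}
\sup_{t\in[0,T]}\|u(t)\|_{C^0(\R^d;\R^m)} \le C\, M(T,\epsilon_1,\epsilon_2),
\end{equation*}
which can be made strictly less than $1-\delta$ by choosing $\epsilon_1,\epsilon_2$ small enough, depending on $T$ and on the embedding constant.

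Finally, this yields $|u(t,x)|<1-\delta<1$ for every $(t,x)\in[0,T]\times\Omega$, which is \eqref{lessone}; since $\tilde W\equiv W$ and $\nabla\tilde W\equiv \nabla W$ on this region, the weak formulation \eqref{eq:eqweak} written for $\tilde W$ coincides with that for $W$, so $u$ is a weak solution of the original problem, and \eqref{energyestimate} is inherited from the energy inequality for $\tilde W$. The main obstacle is the middle step: one must upgrade the Hilbertian energy inequality into a pointwise bound that is both uniform in $t\in[0,T]$ and genuinely smaller than $1$, which is exactly where the hypothesis $2s>d$ becomes indispensable and why the smallness thresholds $\epsilon_1,\epsilon_2$ must be chosen depending on $T$.
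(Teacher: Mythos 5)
Your argument is correct, and it reaches the conclusion by a slightly different (and arguably cleaner) decomposition than the paper. The paper does not first replace $W$ by a single regular potential: it approximates $W$ directly by a family $W_\epsilon$ with Lipschitz gradients, invokes \cite{MCMO} for each $\epsilon$, and then uses exactly your chain of estimates (energy inequality, the elementary $\|u^\epsilon(t)-u_0\|_{L^2}\le T\sqrt{2E}$ bound from integrating $u^\epsilon_t$, and Lemma~\ref{embedding} with $2s>d$) to show that \emph{all approximate solutions} satisfy $|u^\epsilon|\le 1-\eta$ uniformly in $\epsilon$ and $t$; since the $u^\epsilon$ never reach $\partial\mathbf{B}(0,1)$, the limit passage of Theorem~\ref{thm:main1} goes through and yields \eqref{lessone} and \eqref{energyestimate}. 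You instead modify $W$ once, in the annulus $1-\delta\le|y|\le 1$, to get a potential $\tilde W$ admissible for Theorem~\ref{thm:main1}, and apply the confinement estimate only to the resulting limit solution $u$, concluding that $u$ never sees the modified region and hence $\nabla\tilde W(u)=\nabla W(u)$ and $\tilde W(u)=W(u)$ in \eqref{eq:eqweak} and in the energy, so \eqref{lessone} and \eqref{energyestimate} transfer verbatim. What your route buys is that you do not have to re-run the compactness and limit arguments or control the whole approximating family: the uniform bound is needed only for the single solution delivered by Theorem~\ref{thm:main1}. What the paper's route buys is independence from the existence of a globally regular potential agreeing with $W$ on a neighbourhood of the trajectory (here trivially available, but the family $W_\epsilon\to W$ scheme is the one that would generalize). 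Two minor points to make explicit if you write this up: the threshold on $\epsilon_1,\epsilon_2$ depends on $T$, $\delta$ and the embedding constant of Lemma~\ref{embedding}, as you note; and the pointwise-in-$t$ application of Lemma~\ref{embedding} uses that the energy inequality of Theorem~\ref{thm:main1} holds for every $t\in[0,T]$ (so $u(t)\in\Hst(\Omega)$ for every $t$), which is exactly the same implicit step the paper relies on.
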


\begin{proof}
	We repeat the approach used in the proof of Theorem \ref{thm:main1}: construct a family of non-negative potentials $(W_{\epsilon})_{\epsilon >0}$ in $C^{2}(\R)$ such that:
	\begin{itemize}
		\item[(i)] $W_{\epsilon}$ converges uniformly to $W$ in $\R^{m}$,
		\item[(ii)] $\nabla W_{\epsilon}$  converges pointwise to $\nabla W$ in $\R^{m} \setminus \partial \mathbf{B}(0, 1)$, $\nabla W_{\epsilon}$ converges uniformly to $\nabla W$ in $\mathbf{B}(0, 1)$, $\nabla W_{\epsilon}$ is uniformly bounded in $\R^{m}$,
		\item[(iii)] $\nabla W_{\epsilon}$ is Lipschitz for each $\epsilon$.
	\end{itemize} 
	For each $\epsilon >0$ there exists a weak solution $u_{\epsilon}$ in the sense of Definition $\ref{def:weak}$ corresponding to $W_{\epsilon}$ with initial data $u_{0}, v_{0}$ such that for any $t \in [0,T]$ one has 
	\begin{equation}\label{Estimate1}
	\frac12 ||u^\epsilon_{t}(t)||^{2}_{L^{2}(\Omega)}+\frac12 [u^\epsilon(t)]_{s}^2+||W_\epsilon(u^\epsilon(t))||_{L^{1}(\Omega)} \leq \frac12 ||v_0||^{2}_{L^{2}(\Omega)}+\frac12 [u_0]_{s}^2+||W_{\epsilon}(u_0)||_{L^{1}(\Omega)}.
	\end{equation}
	
	\medskip
	\noindent
	Since $W_{\epsilon}$ converges uniformly to $W$ in $\R^{m}$, for sufficiently small $\epsilon$ we have 
	\begin{equation}
	|W_{\epsilon}(y)-W(y)|\leq \epsilon_{3}
	\end{equation}
	for any $y\in \R^{m}$ and $\epsilon_{3} > 0$ fixed. This fact combined with $\eqref{initaldata}$  implies that
	\begin{equation}\label{Estimate2}
	||W_{\epsilon}(u_0)||_{L^{1}(\Omega)}\leq ||W(u_0)||_{L^{1}(\Omega)}+ \epsilon_{3}|\Omega|\leq |\Omega| \epsilon_{1}^{2}+ \epsilon_{3}|\Omega|.
	\end{equation}
	Thus, combining $\eqref{Estimate1}$ with estimates in $\eqref{initaldata}$ and $\eqref{Estimate2}$, we obtain that 
	\begin{align}\label{Estimate3}
	E(u^\epsilon(t)) &= \frac12 ||u^\epsilon_{t}(t)||^{2}_{L^{2}(\Omega)}+\frac12 [u^\epsilon(t)]_{s}^2+||W_\epsilon(u^\epsilon(t))||_{L^{1}(\Omega)} \leq C(\epsilon_{1}, \epsilon_{2}, \epsilon_{3}, \Omega)
	\end{align}
	for any $t \in [0,T]$.
	On the other hand, we have
	\begin{equation}
	\begin{aligned}
	||u^\epsilon(t)-u^\epsilon(0)||^{2}_{L^{2}(\Omega)}&=\int_{\Omega}\left|\int_{0}^{t}u^{\epsilon}_{t}(s,x)ds\right|^{2}dx \leq t\int_{\Omega}\int_{0}^{t}|u^{\epsilon}_{t}(s,x)|^{2}dsdx \\
	&\leq T\int_{0}^{t}\int_{\Omega}|u^{\epsilon}_{t}(s,x)|^{2}dxds \leq 2T^{2}C(\epsilon_{1}, \epsilon_{2}, \epsilon_{3}, \Omega)
	\end{aligned}
	\end{equation}
	where we have made use of Jensen's inequality and Fubini's theorem. Hence,
	\begin{equation}\label{Estimate4}
	||u^{\epsilon}(t)||_{L^{2}(\Omega)}\leq ||u^{\epsilon}(0)||_{L^{2}(\Omega)}+T\sqrt{2C(\epsilon_{1}, \epsilon_{2}, \epsilon_{3}, \Omega)} \leq \epsilon_{1}+T\sqrt{2C(\epsilon_{1}, \epsilon_{2}, \epsilon_{3}, \Omega)}
	\end{equation}
	for all $t\in [0, T]$.
	So, from the estimates $\eqref{Estimate3}$ and $\eqref{Estimate4}$ we obtain that
	\begin{equation}
	||u^{\epsilon}(t)||_{\Hst(\Omega)} \leq {C}(\epsilon_{1}, \epsilon_{2}, \epsilon_{3}, T, \Omega).
	\end{equation}
	Since $2s>d$, by means of the Sobolev embedding from $\Hst(\Omega)$ into the space $C^{0}(\R^{d};\R^{m})$ (see Lemma \ref{embedding}), we obtain
	\begin{equation}
	||u^{\epsilon}(t)||_{C^{0}(\Omega;\R^{m})}\leq C ||u^{\epsilon}(t)||_{\Hst(\Omega)} \leq {C}(\epsilon_{1}, \epsilon_{2}, \epsilon_{3}, T, \Omega),
	\end{equation}
	for all $t\in [0, T]$, where ${C}(\epsilon_{1}, \epsilon_{2}, \epsilon_{3}, T, \Omega)$ is decreasing as soon as $\epsilon_{1}, \epsilon_{2}, \epsilon_{3}$ are decreasing. Thus, for any small $\eta > 0$, by choosing $\epsilon_{1}, \epsilon_{2}, \epsilon_{3}$ small enough one has
	\begin{equation}
	|u^{\epsilon}(x,t)|\leq 1-\eta
	\end{equation}
	for any $(t,x)\in [0, T]\times \Omega$. Since approximate solutions never enter the discontinuity region of the gradient $\nabla W$, one can then repeat the same steps as in the proof of Theorem \ref{thm:main1} to pass to the limit along the sequence $(u^\epsilon)_\epsilon$ and obtain a weak solution satisfying \eqref{lessone} and \eqref{energyestimate}.
\end{proof}
\medskip
\noindent
As the above discussion made clear, in order to be able to handle problems with a discontinuity of the adhesive glue layer, i.e. discontinuities in $\nabla W$, a more robust notion of solution is needed. An immediate follow-up would be to consider for instance solutions in the sense of differential inclusions, e.g.
\[
u_{tt} + (-\Delta)^su \in -\partial W(u)
\]
(see for example $\cite{Chang}$, and references therein) or in the sense of Young measures, but we postpone such discussion to future works.
\section*{Acknowledgements}
The authors are partially supported by GNAMPA-INdAM. We thank the anonymous referees for their remarks that helped to improve the presentation of the note.

\end{document}